\theoremstyle{plain}
    \newtheorem{thm}{Theorem}
    \newtheorem{lem}[thm]{Lemma}
    \newtheorem{cor}[thm]{Corollary}
    \newtheorem{fact}[thm]{Fact}
    \newtheorem*{prob}{Problem}
   \newtheorem*{ans}{Answer}
\theoremstyle{definition}
    \newtheorem{defn}[thm]{Definition}
\theoremstyle{remark}
\newcommand{\nin}{\notin}
\newcommand{\on}{{\upharpoonright}}
\renewcommand{\ss}{\subseteq}
\newcommand{\inv}{^{-1}}
\newcommand{\To}{\rightarrow}
\newcommand{\mult}{\times}
\DeclareMathOperator{\Cl}{Cl}
\DeclareMathOperator{\Pol}{Pol}
\DeclareMathOperator{\ran}{ran}
\DeclareMathOperator{\dom}{dom}
\newcommand{\C}{{\mathscr C}}
\newcommand{\M}{{\mathscr M}}
\newcommand{\N}{{\mathbb N}}
\renewcommand{\O}{{\mathscr O}}
\newcommand{\On}{{\mathscr O}^{(n)}}
\newcommand{\Oo}{{\mathscr O}^{(1)}}
\newcommand{\un}{^{(n)}}
\newcommand{\linen}{\omega\mult\{n\}}
\newcommand{\cc}{_{\cup c}}
\newcommand{\cco}{_{\cup c^1}}
\newcommand{\ccup}{\bigcup_{c\in X^S}}
\author%[M.\,Goldstern]
{Martin Goldstern}
\address{Algebra\\TU Wien\\Wiedner Hauptstrasse 8-10/104\\A-1040 Wien, Austria}
\email{goldstern@tuwien.ac.at}\urladdr{http://www.tuwien.ac.at/goldstern/}
\author%[M.\,Pinsker]
{Michael Pinsker}
\address{Laboratoire de Math\'{e}matiques Nicolas Oresme\\
CNRS UMR 6139\\ Universit\'{e} de Caen\\14032 Caen Cedex\\
France}
\email{marula@gmx.at}\urladdr{http://dmg.tuwien.ac.at/pinsker/}
\thanks{The second author is grateful for support through
grant P17812 as well as through the Erwin Schr\"{o}dinger Fellowship
J2742-N18 of the Austrian Science Foundation (FWF)}
\title{Ideal clones: Solution to a problem of Cz\'{e}dli and Heindorf}
\subjclass[2000]{Primary 08A40; secondary 08A05}
\keywords{clone lattice, ideal clone, cover}
\begin{document}

\begin{abstract}
    Given an infinite set $X$ and an ideal $I$ of subsets of $X$, the set of
    all finitary operations on $X$ which map all (powers of) $I$-small
    sets to $I$-small sets is a clone. In \cite{CH01}, G.~Cz\'{e}dli
    and L.~Heindorf asked whether or not for two particular ideals
    $I$ and $J$ on a countably infinite set $X$, the corresponding
    ideal clones were a covering in the lattice of clones. We give an
    affirmative answer to this question.
\end{abstract}

\maketitle

\section{Clones and ideals}

Let $X$ be an infinite set and denote the set of all $n$-ary
operations on $X$ by $\On$. Then $\O:=\bigcup_{n\geq 1}\O\un$ is the
set of all finitary operations on $X$. A subset $\C$ of $\O$ is
called a \emph{clone} iff it contains all projections, i.e. for all
$1\leq k\leq n$ the function $\pi^n_k\in\On$ satisfying
$\pi^n_k(x_1,\ldots,x_n)=x_k$, and is closed under composition. The
set of all clones on $X$, ordered by set-theoretical inclusion,
forms a complete algebraic lattice $\Cl(X)$. The structure of this
lattice has been subject to much investigation, many results of
which are summarized in the recent survey \cite{GP06survey}.

One such result, from \cite{Ros76}, states that there exist as many
dual atoms (``\emph{precomplete clones}'') in $\Cl(X)$ as there are
clones (that is, $2^{2^{|X|}}$), suggesting that it is impossible to
describe all of them (as opposed to the clone lattice on finite $X$,
where the dual atoms are finite in number and explicitly known
\cite{Ros70}). Much more recently, a new and short proof of this
fact was given in \cite{737}. It was observed that given an ideal
$I$ of subsets of $X$, one can associate with it a clone $\C_I$
consisting of those operations $f\in\O$ which satisfy $f[A^n]\in I$
for all $A\in I$. The authors then showed that prime ideals
correspond to precomplete clones, and that moreover the clones
induced by distinct prime ideals differ, implying that there exist
as many precomplete clones as prime ideals on $X$; the latter are
known to amount to $2^{2^{|X|}}$.

The study of clones that arise in this way from ideals was pursued
in \cite{CH01}, for countably infinite $X$. The authors concentrated
on the question of which ideals induce precomplete clones, and
obtained a criterion for precompleteness. In the same paper, three
open problems were posed, and we provide the solution to their
second problem in this article.

We mention that in the article \cite{BGHP} which is still in
preparation, a theory of clones which arise from ideals is being
developed. In particular, that paper contains the solution to the
first problem from \cite{CH01}, which asked whether every ideal
clone could be extended to a precomplete ideal clone. Its third
question, which asks whether every ideal clone is covered by another
ideal clone, is still unsolved.

\section{The question and its answer}

The problem from \cite{CH01} we are going to solve concerns two
particular ideals $I, J$ on $X=\omega\mult\omega$: We call sets of
the form $\linen$, where $n\in\omega$, \emph{lines}, and sets of the
form $\{n\}\times\omega$ \emph{rows}. $J$ is the ideal of those
subsets of $X$ whose intersection with every line is finite. The
\emph{width} of a subset $Y$ of $X$ is defined by $\sup\{|Y\cap
(\linen)|:n\in\omega\}$. $I$ is the ideal of those sets which have
finite width. Clearly, $I\subseteq J$.

It is easy to see that distinct proper ideals containing all finite
subsets of $X$ yield distinct clones, so $\C_I\neq \C_J$. In
general, the mapping $K\mapsto \C_K$ is not monotone; in this case,
however, we have $\C_I\subsetneq \C_J$. This was shown in
\cite{CH01} in order to provide the first example of an ideal
(namely, $I$) which does not induce a precomplete clone. (It also
follows easily from the more recent and general results in
\cite{BGHP}, since $J$ is the \emph{regularization} of $I$.)

The second problem posed in \cite{CH01} was

\begin{prob}
    Is $\C_J$ a cover of $\C_I$ in $\Cl(\omega\mult\omega)$? That
    is, is the interval $(\C_I,\C_J)$ of $\Cl(\omega\mult\omega)$ empty?
\end{prob}

We will now prove that the answer is

\begin{ans}
    Yes.
\end{ans}

We remark that our original motivation for working on this problem
was the fact that a negative answer would have yielded a negative
answer to the more general third problem from \cite{CH01}, asking
whether every ideal clone is covered by an ideal clone. This is
because one can show, and quite easily so with the methods from
\cite{BGHP}, that if $\C_I$ has a cover which is an ideal clone,
then this cover must be equal to $\C_J$.

\section{The proof}

In order to prove that $\C_J$ covers $\C_I$, we must show that if we
are given any $g\in\C_J$ and any $f\in \C_J\setminus\C_I$, then $g$
can be written as a term over the set $\{f\}\cup \C_I$. We divide
our proof into two parts: In the first part, we show that the
$m$-ary function $g$ can be decomposed as $g=g'(h_1,\ldots,h_m)$,
where the $h_i$ are $m$-ary operations in $\C_I$, and the $m$-ary
operation $g'\in\C_J$ is what we will call \emph{hereditarily
thrifty}; this will be achieved in
Corollary~\ref{cor.decomposition}. In the second part, we show that
every hereditarily thrifty operation in $\C_J$, so in particular
$g'$, can be written as a term over $\{f\}\cup \C_I$ (Lemma
\ref{lem.hereditarilyThriftyAreGenerated}).

Before we go into Part 1, we fix some notation and give the
definition of a hereditarily thrifty function.

From now on, we write $X=\omega\times\omega$. For $d\in X$, we write
$d=(d^x|d^y)$ and refer to $d^x$ as the \emph{$x$-coordinate} and to
$d^y$ as the \emph{$y$-coordinate} of $d$. We set $M=\{1,\ldots,m\}$
for all $m\geq 1$. An $m$-ary function on $X$ maps the $M$-tuples of
$X^M$ into $X$.

For notational reasons we will often consider partial functions from
$X^M$ into a set $Y$, that is, functions into $Y$ whose domain is a
subset of $X^M$. We refer by $\dom(p)$ to the domain and by
$\ran(p)$ to the range of a partial function $p$.

\begin{defn}
    Let $\C$ be a clone. For a partial function $p$ we write
    $p\in \C$ iff there is a total function $p'\in \C$ extending $p$.

    If $p:X^M\to X^M$ is a partial function, then we write $p\in \C$ iff
    each of the $k$ component functions $\pi^m_k\circ p$ of $p$ is in $\C$.
\end{defn}

\begin{defn}
    For any partial function $p:X^M\to X$, let
    $\bar p$ be defined
        as the extension of $p$ to $X^M$ obtained by setting $\bar p(x) =(0|0)$ for all $x\in X^M\setminus\dom(p)$.
\end{defn}

The following lemma justifies the use of partial functions in our
proof, since we can always extend them to total functions respecting
$I$.

\begin{lem}\label{lem.extension}
    For all partial functions $p: X^M\To X$ we have $p\in \C_I$ iff $\bar p\in \C_I$.
\end{lem}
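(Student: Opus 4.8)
The statement is an "iff", but only one direction has content: if $\bar p \in \C_I$ then $p \in \C_I$ is trivial, since $\bar p$ itself is a total function in $\C_I$ extending $p$. So the plan is to prove the forward direction: given that $p$ has *some* total extension $p' \in \C_I$, we must show that the *canonical* extension $\bar p$ (which fills in $(0|0)$ off the domain) also lies in $\C_I$.

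First I would fix a witness $p' \in \C_I$ extending $p$ and try to show directly that $\bar p$ maps $I$-small sets to $I$-small sets. Take $A \in I$, so $A$ has finite width, and consider $\bar p[A^M]$. For a tuple $\vec a \in A^M$, either $\vec a \in \dom(p)$, in which case $\bar p(\vec a) = p(\vec a) = p'(\vec a) \in p'[A^M]$, or $\vec a \notin \dom(p)$, in which case $\bar p(\vec a) = (0|0)$. Hence $\bar p[A^M] \subseteq p'[A^M] \cup \{(0|0)\}$. Since $p' \in \C_I$ we have $p'[A^M] \in I$, and adding a single point keeps a set in $I$ (the width changes by at most one, or: $I$ contains all finite sets and is an ideal). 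Therefore $\bar p[A^M] \in I$, which is exactly the condition for $\bar p \in \C_I$. The only remaining point is the case split on arities/partial-function components, but since $p$ is single-valued into $X$ this is immediate; and if one instead wants to invoke the convention for partial maps $X^M \to X^M$ from the preceding definition, one just applies the argument coordinatewise.

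I expect there to be essentially no obstacle here: the lemma is a bookkeeping device, and the key observation is simply that $\bar p$ differs from an arbitrary total extension $p'$ only in that it outputs the fixed constant $(0|0)$ instead of whatever $p'$ does off $\dom(p)$, and $\{(0|0)\}$ is $I$-small. The one thing to be careful about is that $\C_I$'s defining condition quantifies over *all* $A \in I$ and over the full power $A^M$, not just over, say, lines; but the containment $\bar p[A^M] \subseteq p'[A^M] \cup \{(0|0)\}$ holds for every $A$ simultaneously, so this causes no trouble. It is also worth recording explicitly that $I$ is closed under adding finitely many points (equivalently, contains the singletons), which is clear from the definition of finite width.
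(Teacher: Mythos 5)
Your proof is correct, and since the paper's own proof is just the single word ``Obvious,'' your direct verification (the containment $\bar p[A^M]\subseteq p'[A^M]\cup\{(0|0)\}$ for the forward direction, plus the trivial converse) is exactly the intended argument spelled out.
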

\begin{proof}
    Obvious.
\end{proof}

\begin{defn}
    For every $m\geq 1$ and every $k\in\omega$, we define a subset
    $B^M_k\subseteq X^M$ to consist of all tuples of $X^M$ which have at
    least one component whose $y$-coordinate is less than $k$.
    Formally, if for all $i\in M$ and all $M$-tuples $u\in X^M$ we write $u_i$ for the $i$-th component of $u$,
    $$B^M_k:=\{u\in X^M:\exists i\in M\, ((u_i)^y<k)\}.$$
\end{defn}

\begin{defn}
    We call a subset of $X^M$ \emph{bounded} iff it is contained in
    $B^M_k$ for some $k\in\omega$. A partial function  $p:X^M\to Y$ is \emph{thrifty} iff
    $p\inv[d]$ is bounded for all $d\in Y$.
\end{defn}

Observe that when we talk about subsets of $X$ having \emph{finite
width}, we want to restrict the possible \emph{x-coordinates} for
each \emph{y} to a small set (of \emph{size} $<k$); for a
1-dimensional \emph{bounded} subset of $X$, on the other hand, we
restrict the possible \emph{y-coordinates} for each \emph{x} to a
small set (with \emph{maximum} $<k$).

 We will now give the definition of a hereditarily thrifty function.
Loosely speaking, a function is hereditarily thrifty iff it is
thrifty and whenever we fix some of its arguments to fixed values in
$X$, then the function of smaller arity obtained this way is thrifty
as well. The formal definition is as follows:

\begin{defn}
Let $M$ be a finite index set, and let $S\subseteq M$. Set $T:=
M\setminus S$. An $S$-tuple is a total function from $S$ to $X$. The
$M$-tuples are exactly the unions of $S$-tuples with $T$-tuples.
(Since $X^S \times X^T$ is naturally isomorphic with $X^M$ through
the map $(c,z)\mapsto c\cup z$, we may occasionally identify $X^S
\times X^T$ with $X^M$, or the tuple $c\cup z$ with the ordered pair
$(c,z)$.)

For any partial function $p:X^M \to Y$,  we write $p\cc$ for the
    function $p\cc:X^{T}\to Y$ defined by $p\cc(z) = p(z\cup c)$.
\end{defn}

\begin{defn}
    Let $p: X^M\To Y$ be a partial function. We
    call $p$ \emph{hereditarily thrifty} iff $p\cc$ is thrifty for
    all $S\subseteq M$ and all tuples $c\in X^S$.
\end{defn}

\bigskip \subsection*{Part 1:  Decomposing $\C_J$--functions into hereditarily thrifty
functions and $\C_I$--functions}

In this part, we show that given any function $g: X^M\To Y$, we can
find a function $h: X^M\To X^M$, $h\in\C_I$, and a hereditarily
thrifty $g'\subseteq g$ such that $g=g'\circ h$ (Corollary
\ref{cor.decomposition}). Clearly, if $Y=X$ and $g\in \C_J$, then
also $g'\in\C_J$; thus, we can decompose the $\C_J$--function $g$
into the composition of a hereditarily thrifty $\C_J$--function $g'$
with a $\C_I$--function $h$.

We first state some trivial facts about the composition of
functions. In order to avoid confusion with the components of
elements of $X$, we use the symbol $\mathbb{N}$ (rather than
$\omega$) to denote the index set of countable families of
functions.

\begin{fact}\label{fact.union}
    Assume that $f_n$, $n\in \N$, are functions with pairwise disjoint
    domains, and  similarly $g_n$, $n\in \N$. Then:
    \begin{itemize}
        \item  $\bigcup_n f_n $  and $\bigcup_n g_n $  are
        functions.
        \item  The functions $f_n\circ g_n$ (for $n\in \N$) have pairwise
        disjoint domains,  and hence their union is a function.
        \item
        $\bigcup_n \bigg(f_n\circ g_n\bigg)\subseteq \bigg(\bigcup_n f_n
        \bigg) \circ \bigg(\bigcup_n g_n\bigg)$.
    \end{itemize}
\end{fact}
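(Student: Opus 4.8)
The plan is to verify the three bullet points in turn, each by a direct unwinding of the set-theoretic definitions of ``function'' (a single-valued set of ordered pairs) and ``composition''. None of the three steps needs more than a one-line argument; the only thing to watch is which composition convention is in force, namely $f\circ g\colon x\mapsto f(g(x))$, so that $\dom(f\circ g)\subseteq\dom(g)$. Thus the ``main obstacle'' here is really just keeping the bookkeeping straight rather than any genuine difficulty.

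For the first point I would show $\bigcup_n f_n$ is single-valued: if $(a,b)$ and $(a,b')$ both lie in $\bigcup_n f_n$, choose $n,n'$ with $(a,b)\in f_n$ and $(a,b')\in f_{n'}$; then $a\in\dom(f_n)\cap\dom(f_{n'})$, so $n=n'$ because the domains are pairwise disjoint, and then $b=b'$ since $f_n$ is itself a function. The identical argument applies to $\bigcup_n g_n$.

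For the second point I would first observe $\dom(f_n\circ g_n)\subseteq\dom(g_n)$, which is immediate from the definition of composition. Since the sets $\dom(g_n)$ are pairwise disjoint, so are the smaller sets $\dom(f_n\circ g_n)$; hence, by exactly the argument just used in the first point, their union is again a function.

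For the third point I would take an arbitrary pair $(a,c)\in\bigcup_n(f_n\circ g_n)$, fix an $n$ with $(a,c)\in f_n\circ g_n$, and pick the witness $b$ with $(a,b)\in g_n$ and $(b,c)\in f_n$. Then $(a,b)\in\bigcup_n g_n$ and $(b,c)\in\bigcup_n f_n$, so $(a,c)\in(\bigcup_n f_n)\circ(\bigcup_n g_n)$, which is the desired inclusion. I would close with the remark that this inclusion is in general proper, because the right-hand side also contains pairs coming from a ``mismatch'' $(a,b)\in g_n$, $(b,c)\in f_{n'}$ with $n\neq n'$; this is precisely why the statement is phrased with $\subseteq$ rather than equality.
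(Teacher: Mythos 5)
Your proof is correct; the paper states this as a ``trivial fact'' and omits the proof entirely, and your direct set-theoretic verification (single-valuedness via disjointness of domains, $\dom(f_n\circ g_n)\subseteq\dom(g_n)$, and chasing a pair through the composition) is exactly the intended argument. The closing remark on why the inclusion can be proper is a correct and worthwhile observation, since the paper indeed relies on Fact~\ref{fact.sub} afterwards precisely to pass from $\subseteq$ back to equality.
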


\begin{fact} \label{fact.sub}
    If $g \subseteq g' \circ h'$, then
    there is a function $h\subseteq h'$ such that $g=g'\circ h$.
\end{fact}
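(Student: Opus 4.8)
The plan is simply to cut the domain of $h'$ down to that of $g$: I would set $h := h' \rest \dom(g)$ and claim that this $h$ does the job. That $h$ is a function with $h \subseteq h'$ is immediate, being a restriction of the function $h'$.

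The one thing to verify is that $g' \circ h$ equals $g$ on the nose, with no spurious extra points in its domain. The hypothesis $g \subseteq g' \circ h'$ says exactly that for every $x \in \dom(g)$ we have $x \in \dom(g' \circ h')$ — i.e.\ $x \in \dom(h')$ and $h'(x) \in \dom(g')$ — together with $g(x) = g'(h'(x))$. The first part gives $\dom(g) \subseteq \dom(h')$, so $\dom(h) = \dom(g)$, and $\ran(h) \subseteq \dom(g')$, so that $\dom(g' \circ h) = \dom(h) = \dom(g)$. The second part gives, for each such $x$, that $(g' \circ h)(x) = g'(h(x)) = g'(h'(x)) = g(x)$. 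Hence $g = g' \circ h$, as required. There is no real obstacle; the only point deserving a moment's attention is checking that the composition on the right does not acquire a domain larger than $\dom(g)$, which is precisely what the inclusion $\ran(h) \subseteq \dom(g')$ rules out.
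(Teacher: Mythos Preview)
Your argument is correct: restricting $h'$ to $\dom(g)$ yields exactly the required $h$, and you have verified carefully that the domain of $g'\circ h$ does not overshoot. The paper states this as a fact without proof, so there is nothing further to compare.
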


\begin{defn}
    We call a partial function $p:X^M\to Y$ \emph{wasteful} iff for
    every $y\in \ran(p)$ the set $p\inv[y]$ is unbounded.
\end{defn}

\begin{lem}\label{lem.splitting}
    For every partial function $p:X^M\to Y$ we can find disjoint sets
    $W_p, T_p \subseteq \dom(p)$  such that $\dom(p) = W_p\cup T_p$, and
    $p\on W_p$ is wasteful and $p\on T_p$ is thrifty.
\end{lem}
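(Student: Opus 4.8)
The plan is to build $W_p$ and $T_p$ by a transfinite (in fact countable) recursion that "peels off'' bounded preimages one value at a time, so that the part of the domain we put into $T_p$ always has bounded preimages, and everything left over is genuinely wasteful. First I would fix an enumeration $\ran(p)=\{y_n:n\in\N\}$ (possibly with repetitions if $\ran(p)$ is finite; the range is at most countable since $X$ is countable). I would then try to define, by recursion on $n$, an increasing sequence of subsets $T^{(n)}\subseteq\dom(p)$ together with the ``surviving'' range $R^{(n)}$, as follows: start with $T^{(0)}=\emptyset$ and $R^{(0)}=\ran(p)$. Given $T^{(n)}$ and $R^{(n)}$, look at the value $y_n$. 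If $y_n\notin R^{(n)}$, do nothing: $T^{(n+1)}=T^{(n)}$, $R^{(n+1)}=R^{(n)}$. Otherwise consider $p^{-1}[y_n]$ intersected with what remains, namely $p^{-1}[y_n]\setminus T^{(n)}$; if this set is bounded, absorb it into $T$, i.e. $T^{(n+1)}=T^{(n)}\cup (p^{-1}[y_n]\setminus T^{(n)})$ and $R^{(n+1)}=R^{(n)}\setminus\{y_n\}$; if it is unbounded, leave it alone, $T^{(n+1)}=T^{(n)}$, $R^{(n+1)}=R^{(n)}$.

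Having run the recursion, set $T_p=\bigcup_{n\in\N}T^{(n)}$ and $W_p=\dom(p)\setminus T_p$. The two sets are disjoint and cover $\dom(p)$ by construction. For thriftiness of $p\on T_p$: for each $d\in\ran(p\on T_p)$ there is a least stage $n$ at which $p^{-1}[d]\setminus T^{(n)}$ was declared bounded and absorbed, and from that stage on no further elements of $p^{-1}[d]$ are added to $T$ (nothing is ever removed, and at stage $n$ we added all of the then-remaining preimage of $d$); hence $(p\on T_p)^{-1}[d]=p^{-1}[d]\cap T_p$ equals the bounded set we absorbed at stage $n$ together with whatever sat in $T^{(n)}$ already, and an earlier-absorbed chunk of $p^{-1}[d]$ is again bounded, so the whole thing is a finite union of bounded sets, hence bounded. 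For wastefulness of $p\on W_p$: take $d\in\ran(p\on W_p)$, so there is $x\in W_p$ with $p(x)=d$, i.e. $x\notin T_p$. Then at the stage $n$ with $y_n=d$ (the first such stage) we must have been in the ``unbounded'' case — otherwise all of $p^{-1}[d]$ still present would have been absorbed into $T_p$, contradicting $x\notin T_p$. One checks that being unbounded at that stage already means $p^{-1}[d]\setminus T^{(n)}$ is unbounded, and since $T_p\supseteq T^{(n)}$ while $T_p$ never swallows any element of $p^{-1}[d]$ after that (we were in the ``do nothing'' branch for $d$ at every later stage where $y_k=d$), we get $(p\on W_p)^{-1}[d]=p^{-1}[d]\setminus T_p\supseteq p^{-1}[d]\setminus(T^{(n)}\cup\text{(nothing more)})$, which is unbounded.

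The only point that needs a little care — and the one I would flag as the main obstacle, though it is minor — is making sure the bookkeeping in the recursion really does freeze each value after its decisive stage, so that the preimage-in-$T_p$ of a value does not keep growing across infinitely many stages (which could turn a countable union of bounded sets into an unbounded set). This is handled by the observation that once we have decided, at the first stage $n$ with $y_n=d$, either to absorb or not, every later stage $k$ with $y_k=d$ falls into the ``$y_k\notin R^{(k)}$'' case (if we absorbed) or into the same ``unbounded'' case with an even larger $T^{(k)}$ subtracted (if we did not), so in either case nothing new involving $d$ enters $T$; thus $p^{-1}[d]\cap T_p$ is determined by a single stage and is bounded or empty. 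An entirely equivalent and perhaps cleaner phrasing, which I would likely adopt in the write-up to avoid enumerations altogether, is the direct definition
$$T_p:=\bigcup\{p^{-1}[d]: d\in\ran(p),\ p^{-1}[d]\text{ is bounded}\},\qquad W_p:=\dom(p)\setminus T_p,$$
for which thriftiness of $p\on T_p$ and wastefulness of $p\on W_p$ are immediate from the definitions together with the fact that $\ran(p)$ is countable and a countable union of bounded subsets of $X^M$ need \emph{not} be bounded — wait, that is exactly the subtlety, so in this phrasing one restricts the union to those $d$ with $p^{-1}[d]$ bounded and notes that $(p\on T_p)^{-1}[d]=p^{-1}[d]$ for such $d$ and is empty otherwise, giving thriftiness; and for $d$ with $p^{-1}[d]$ unbounded one has $(p\on W_p)^{-1}[d]=p^{-1}[d]$, giving wastefulness. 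Either way the lemma follows.
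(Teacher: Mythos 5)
Your proposal is correct, and the clean direct definition you arrive at in the final paragraph ($T_p$ the union of all bounded fibres, $W_p$ the rest, noting that fibres of distinct values are disjoint so each restricted fibre is either a full bounded fibre, a full unbounded fibre, or empty) is exactly the argument the paper has in mind when it dismisses the proof as ``Easy.'' The preceding recursive construction is superfluous: since the fibres $p^{-1}[d]$ are pairwise disjoint, your recursion collapses to that direct definition anyway.
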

\begin{proof}
    Easy.
\end{proof}

\begin{lem}\label{lem.finiteUnions}
    Let $p_1$, $p_2$ be partial functions on $X^M$ with disjoint
    domains. Then $p_1\cup p_2\in \C_I$ iff $p_1\in \C_I$ and $p_2\in
    \C_I$. Moreover,  $p_1\cup p_2$ is thrifty iff both $p_1$ and $p_2$
    are thrifty.
\end{lem}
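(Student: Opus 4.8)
The plan is to prove both equivalences by reducing to the single-function case and using the fact that $I$-smallness and thriftiness are both preserved under finite unions of the relevant sets.

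First I would handle the $\C_I$-part. Writing $p_i$ in component form, it suffices to treat the case $Y=X$. Suppose $p_1, p_2 \in \C_I$. By Lemma~\ref{lem.extension} we may work with total extensions; but more carefully, since $\dom(p_1)$ and $\dom(p_2)$ are disjoint, extending $\overline{p_1\cup p_2}$ requires combining the witnesses for $p_1$ and $p_2$. The cleanest route: take $p_1', p_2' \in \C_I$ total with $p_i\subseteq p_i'$, and define $q$ to agree with $p_1'$ on $\dom(p_1)$, with $p_2'$ on $\dom(p_2)$, and with $(0|0)$ elsewhere. Then $q$ extends $p_1\cup p_2$, and for any $A\in I$ we have $q[A^M]\subseteq p_1'[A^M]\cup p_2'[A^M]\cup\{(0|0)\}$, which is a finite union of sets in $I$, hence in $I$. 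So $q\in\C_I$, giving $p_1\cup p_2\in\C_I$. The converse direction is immediate: a restriction of a function in $\C_I$ is in $\C_I$ (again via Lemma~\ref{lem.extension}, since $\overline{p_i}\subseteq \overline{p_1\cup p_2}$ up to modifying values on a set where the latter is constant — more directly, any total extension of $p_1\cup p_2$ restricts to a total extension of each $p_i$ after filling in $(0|0)$, and the image only shrinks).

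Next I would handle the thriftiness part, which is essentially a set-theoretic triviality. For any $d\in\ran(p_1\cup p_2)$, we have $(p_1\cup p_2)\inv[d] = p_1\inv[d]\cup p_2\inv[d]$ (with the convention that $p_i\inv[d]=\emptyset$ if $d\nin\ran(p_i)$). If both $p_1$ and $p_2$ are thrifty, then $p_1\inv[d]\subseteq B^M_{k_1}$ and $p_2\inv[d]\subseteq B^M_{k_2}$ for suitable $k_1,k_2$, so the union is contained in $B^M_{\max(k_1,k_2)}$, hence bounded; thus $p_1\cup p_2$ is thrifty. Conversely, if $p_1\cup p_2$ is thrifty then $p_i\inv[d]\subseteq (p_1\cup p_2)\inv[d]$ is bounded for each $d$, so each $p_i$ is thrifty.

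I do not expect any real obstacle here — the statement is flagged implicitly as routine by the surrounding "easy"/"obvious" lemmas. The only point requiring a little care is the $\C_I$-direction, where one must remember that $\C_I$ is defined via images of powers $A^M$ and that a finite union of $I$-small sets is $I$-small (which holds because $I$ is an ideal), together with the fact that adding the single point $(0|0)$ keeps a set in $I$ (as $I$ contains all finite sets, being a proper ideal containing the finite subsets of $X$ — this is used throughout the paper). With those observations the argument is a few lines.
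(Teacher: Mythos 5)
Your proof is correct and is exactly the paper's (one-line) argument spelled out in detail: both directions reduce to the fact that $I$ is closed under finite unions (plus, for thriftiness, that $B^M_{k_1}\cup B^M_{k_2}\subseteq B^M_{\max(k_1,k_2)}$). Note that the converse of the $\C_I$-claim is even more immediate than you suggest: any total $\C_I$-extension of $p_1\cup p_2$ is already a total extension of each $p_i$, so no modification is needed.
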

\begin{proof}
    This is clear since $I$ is closed under finite
    unions.
\end{proof}

Recall that we defined the width of a subset $A\subseteq X$ as
$\sup\{|A\cap (\linen)|:n\in\omega\}$. We extend this definition to
subsets of $X^M$.

\begin{defn}
    The \emph{width} of a set $A\subseteq X^M$ is the maximum of the
    widths of its projections onto the components of $M$.
\end{defn}

\begin{lem}\label{countable}
    Let $g_n:X^M\to Y$, $n\in \mathbb{N},$ be wasteful partial functions. Then we can find
    \begin{itemize}
        \item a set $A \subseteq X^M$ of width 1
        \item a family of injective functions $g_n'\subseteq g_n$, with $A$ being
        the disjoint union of the sets $\dom(g'_n)$
        \item   partial functions $h_n:X^M\to A$
    \end{itemize}
    such that $g_n=  g'_n\circ h_n$ for all $n\in\N$.
\end{lem}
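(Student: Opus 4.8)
The plan is to reduce the whole statement to a single recursive choice of one preimage for each value attained by each $g_n$. Concretely, I would choose, for every $n\in\N$ and every $y\in\ran(g_n)$, a tuple $a_{n,y}\in g_n\inv[y]$, and then just put $\dom(g'_n):=\{a_{n,y}:y\in\ran(g_n)\}$, $g'_n:=g_n\on\dom(g'_n)$, $A:=\bigcup_{n\in\N}\dom(g'_n)$, and let $h_n:X^M\To A$ be the partial function with domain $\dom(g_n)$ given by $h_n(u):=a_{n,g_n(u)}$. With these definitions $g'_n\subseteq g_n$, the function $g'_n$ is injective (it maps $a_{n,y}$ to $y$), $\ran(h_n)\subseteq A$, and $g'_n\circ h_n=g_n$ — and all of this holds no matter how the $a_{n,y}$ are picked. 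So everything comes down to choosing the $a_{n,y}$ so that, in addition, (i) they are pairwise distinct, which is exactly what makes the domains $\dom(g'_n)$ pairwise disjoint, and (ii) $A$ has width $1$, i.e.\ for every $i\in M$ the projection $\{(a_{n,y})_i:n\in\N,\,y\in\ran(g_n)\}$ meets each line $\linen$ in at most one point.

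For the choice, I would first observe that $X^M$ is countable, so each $\ran(g_n)$ and the whole index set $P:=\{(n,y):n\in\N,\,y\in\ran(g_n)\}$ are countable; fix an enumeration $(n_j,y_j)_{j<\kappa}$ of $P$, $\kappa\le\omega$, and define $a_{n_j,y_j}$ by recursion on $j$. At step $j$, pick $k_j\in\omega$ strictly above $\big((a_{n_{j'},y_{j'}})_i\big)^y$ for all $j'<j$ and all $i\in M$ (only finitely many numbers). Since $g_{n_j}$ is wasteful and $y_j\in\ran(g_{n_j})$, the set $g_{n_j}\inv[y_j]$ is unbounded, hence not contained in $B^M_{k_j}$, so it contains a tuple $u$ with $(u_i)^y\ge k_j$ for every $i\in M$; set $a_{n_j,y_j}:=u$. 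Then for each fixed $i\in M$ the numbers $\big((a_{n_j,y_j})_i\big)^y$ are strictly increasing in $j$, so the points $(a_{n_j,y_j})_i$ lie in pairwise distinct lines, giving (ii); and the tuples $a_{n_j,y_j}$ are pairwise distinct already in their first coordinate, giving (i). The verification that $g_n=g'_n\circ h_n$ is then immediate from $h_n(u)=a_{n,g_n(u)}\in\dom(g'_n)$ and $g'_n(a_{n,y})=y$.

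I do not expect a genuine obstacle here; the single point to keep straight is the clash between the two ``smallness'' notions in play — the \emph{width} of a subset of $X^M$ restricts, coordinate by coordinate, how many $x$-values may sit over a fixed $y$-value, whereas \emph{boundedness} restricts $y$-coordinates. Wastefulness is precisely the leverage that lets one drive all $m$ coordinates of the chosen preimages off to arbitrarily large $y$-coordinates simultaneously, and once that is arranged both the width-$1$ condition (ii) and the disjointness (i) come for free; the rest is bookkeeping.
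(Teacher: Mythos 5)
Your proposal is correct and follows essentially the same route as the paper: one uses wastefulness (unboundedness of each fibre) to select a single preimage for each value of each $g_n$ with all component $y$-coordinates pushed above everything chosen so far, which simultaneously yields the width-$1$ set $A$, the disjointness of the domains, and the injectivity of the $g'_n$; your explicit recursion is just a concrete realization of the paper's simultaneous choice of tuples with pairwise disjoint component $y$-coordinates.
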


Note that the $g'_n$ are thrifty (as they are injective), and that
the $h_n$ are in $\C_I$ (as $A$ has width 1).

\begin{proof}
    For all $n\in\mathbb{N}$ and every $c\in\ran(g_n)$, pick a tuple $u^{c,n}\in X^M$ in such a way
    that for distinct $(d,i), (e,j)\in Y\mult \mathbb{N}$, the components of the corresponding tuples $u^{d,i}, u^{e,j}$
    have no $y$-coordinates in common; this is possible as
    $g_n\inv[c]$ is unbounded for all $n\in\mathbb{N}$ and all $c\in \ran(g_n)$. Set $A_n$ to consist of all tuples chosen for $g_n$, and let
    $A=\bigcup_n A_n$. Let $g'_n$ be the restriction of
    $g_n$ to $A_n$. Clearly, $A$ has width $1$ and the $g'_n$ are injective
    on their respective domains $A_n$. The existence of the $h_n$ is then trivial.
\end{proof}

The core of Part~1 of our proof is the following lemma.

\begin{lem}\label{lem.strong}
    Let $g:X^M\to Y$ be a partial function and fix a set $S\subseteq M$. Then
    we can write $g$ as $g=g'\circ h$, where $g'\subseteq g$, $g'\cc$ is
    thrifty for each $c\in X^S$, and $h: X^M\To X^M$ is in $\C_I$.
\end{lem}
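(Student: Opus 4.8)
The plan is to fix the set $S$ and its complement $T=M\setminus S$ once and for all, and to run the construction fibrewise over $X^S$: for each tuple $c\in X^S$ we will split the fibre function $g\cc:X^T\to Y$ into a wasteful part and a thrifty part, then glue back together across all $c$. The difficulty is that the individual thrifty-part decompositions coming from different fibres must be combinable into a single function $h\in\C_I$, so the real work is in making the wasteful parts be handled by Lemma~\ref{countable} with a common target set $A$ of width $1$ that covers \emph{all} fibres simultaneously, not just countably many of them at once.

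Concretely, I would proceed as follows. First, for each $c\in X^S$ apply Lemma~\ref{lem.splitting} to $g\cc$, obtaining $\dom(g\cc)=W_c\cup T_c$ with $g\cc\on T_c$ thrifty and $g\cc\on W_c$ wasteful. Pulling these back to $X^M$ via the identification $X^M\cong X^S\times X^T$, we get a partition $\dom(g)=W\cup T$ where $W=\bigcup_{c}(\{c\}\times W_c)$ and $T=\bigcup_c(\{c\}\times T_c)$, and where by construction $(g\on T)\cc$ is thrifty for every $c$. On $T$ we simply take $g'=g\on T$ on that part and let $h$ restricted there be the identity (which is a projection tuple, hence in $\C_I$). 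It remains to deal with $W$, i.e.\ to handle the family $\{g\cc\on W_c : c\in X^S\}$ of wasteful functions. There are $|X^S|$ many such fibres, which is uncountable, so Lemma~\ref{countable} — stated for a countable family indexed by $\mathbb N$ — cannot be applied directly to the whole collection; this is the main obstacle.

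To overcome it I would exploit that each wasteful $g\cc\on W_c$ maps into $Y$, and reorganize the index set. For each $c$, enumerate (with repetition if necessary, or more carefully as in the proof of Lemma~\ref{countable}) the values in $\ran(g\cc\on W_c)$; the point is that for a \emph{fixed} value $d\in Y$, the preimage of $d$ inside a single fibre is unbounded in $X^T$, so we can choose witnessing tuples with pairwise disjoint sets of used $y$-coordinates \emph{across all pairs $(c,d)$}. Since the width-$1$ set $A$ produced by Lemma~\ref{countable} only needs width $1$ — i.e.\ each line meets it in at most one point — and since $X$ has enough room, we can run the witness-selection argument of Lemma~\ref{countable} over the index set $X^S\times Y$ (or $X^S\times\mathbb N$ after re-enumerating each fibre's range as a sequence) rather than over $\mathbb N$: the only property of $\mathbb N$ used in that proof is that it is a set over which we can make the disjointness-of-$y$-coordinates choice, and that remains possible here because at each step we only need one fresh $y$-coordinate per component and there are infinitely many available. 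This yields a width-$1$ set $A\subseteq X^M$, injective restrictions $g'\cc\subseteq g\cc\on W_c$ whose domains partition $A\cap(\{c\}\times X^T)$, and partial maps $h_c:X^T\to A$ with $g\cc\on W_c = g'\cc\circ h_c$.

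Finally I would assemble the pieces: set $g'$ to be the union of $g\on T$ with all the injective maps $g'\cc$ (these have pairwise disjoint domains, so by Fact~\ref{fact.union} the union is a function, and it is $\subseteq g$; moreover $g'\cc$ is thrifty for each $c$, being the restriction to the fibre of an injective function together with a thrifty piece — using Lemma~\ref{lem.finiteUnions}). Define $h:X^M\to X^M$ fibrewise by $h\cc = h_c$ on $W_c$ and $h\cc=\mathrm{id}$ on $T_c$; because $\ran(h)\subseteq A\cup(\text{range of the identity part})$ and $A$ has width $1$, each component function $\pi^m_k\circ h$ maps into a finite-width set, hence $h\in\C_I$ by definition of $\C_I$ and Lemma~\ref{lem.extension}. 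Fact~\ref{fact.sub} lets us shrink $h$ if necessary so that $g=g'\circ h$ exactly. The verification that $g'\cc$ is thrifty for every $c\in X^S$ is then immediate from the construction, completing the proof.
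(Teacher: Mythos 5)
Your decomposition is exactly the one the paper uses: split each fibre $g\cc$ into a thrifty part and a wasteful part via Lemma~\ref{lem.splitting}, keep the thrifty parts (composing with the identity), funnel all the wasteful parts through a single width-$1$ set $A$ via Lemma~\ref{countable}, and reassemble with Fact~\ref{fact.union}, Fact~\ref{fact.sub} and Lemma~\ref{lem.finiteUnions}. So the route is the same as the paper's.

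The one substantive error is your claim that $X^S$ is uncountable and that this blocks a direct application of Lemma~\ref{countable}. In this paper $X=\omega\times\omega$ is countable and $S\subseteq M$ is finite, so $X^S$ is countable and Lemma~\ref{countable} applies verbatim to the family $(w_c : c\in X^S)$ of wasteful parts; the ``main obstacle'' you identify does not exist. Worse, the workaround you sketch would not survive if the obstacle were real: choosing witnessing tuples whose components have pairwise disjoint $y$-coordinates is a selection of distinct elements from the countable set $\omega$, which is impossible over a genuinely uncountable index set, no matter how many are ``available at each step.'' Your argument therefore goes through only because the difficulty it is designed to overcome is illusory. A minor further point: in verifying $h\in\C_I$, the identity part does not ``map into a finite-width set''; it is in $\C_I$ because its components are projections (as you note parenthetically), while for the $h_c$-part the components indexed by $T$ have range inside a projection of the width-$1$ set $A$ and the components indexed by $S$ are again projections; Lemma~\ref{lem.finiteUnions} then combines the two pieces.
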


Before we prove the lemma, we observe that it implies all we need in
this section.

\begin{cor}\label{cor.decomposition}
    Let $g:X^M\to Y$ be a partial function. Then we can write $g$ as
    $g=g'\circ h$, where $g'\subseteq g$, $g'$ is hereditarily  thrifty,
    and $h$ is in $\C_I$.
\end{cor}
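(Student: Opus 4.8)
The plan is to iterate Lemma~\ref{lem.strong} over all subsets $S\subseteq M$. Since $M$ is finite, there are only finitely many such subsets, say $S_1,\dots,S_N$ (enumerated in any order). We will produce a descending chain of partial functions $g=g_0\supseteq g_1\supseteq\cdots\supseteq g_N=:g'$, together with functions $h_1,\dots,h_N\in\C_I$ mapping $X^M$ into $X^M$, so that at stage $j$ we have $g_{j-1}=g_j\circ h_j$ and, crucially, $g_j\cc$ is thrifty for each $c\in X^{S_j}$. Composing, $g=g'\circ(h_N\circ h_{N-1}\circ\cdots\circ h_1)$, and since $\C_I$ is a clone (closed under composition, and the component functions of a composite of $\C_I$-maps $X^M\to X^M$ are again in $\C_I$), the map $h:=h_N\circ\cdots\circ h_1$ lies in $\C_I$. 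It remains only to check that $g'=g_N$ is hereditarily thrifty, i.e.\ that $g'\cc$ is thrifty for \emph{every} $S\subseteq M$ and \emph{every} $c\in X^S$, not merely for $S=S_N$.

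This last point is where a small argument is needed, and it is the only real obstacle. The worry is that passing from $g_{j}$ to $g_{j+1}$ via composition with $h_{j+1}$ might destroy the thriftiness of $g_j\cc$ that we established at an earlier stage. The key observation that rescues us is that thriftiness is inherited downward under restriction: if $p$ is thrifty and $q\subseteq p$, then $q$ is thrifty, since $q\inv[d]\subseteq p\inv[d]$ and subsets of bounded sets are bounded. Moreover, fixing coordinates commutes with restriction, in the sense that $(q\cc)\subseteq(p\cc)$ whenever $q\subseteq p$ (here using that $X^S\times X^T\cong X^M$ identifies the fibres correctly). Since each $g_{j+1}\subseteq g_j\subseteq\cdots\subseteq g_i$, once we have arranged at stage $i$ that $g_i\cc$ is thrifty for all $c\in X^{S_i}$, the same property persists for $g_j\cc$ for all later $j>i$, in particular for $g'=g_N$. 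Running $i$ over all of $1,\dots,N$ and recalling that $\{S_1,\dots,S_N\}$ is exactly the set of all subsets of $M$, we conclude that $g'\cc$ is thrifty for every $S\subseteq M$ and every $c\in X^S$; that is, $g'$ is hereditarily thrifty.

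Concretely, the write-up proceeds as follows. First I would enumerate the subsets of $M$ as $S_1,\dots,S_N$. Then, by induction on $j\in\{0,1,\dots,N\}$, I would construct $g_j\subseteq g$ and (for $j\ge1$) $h_j\in\C_I$ with $g_{j-1}=g_j\circ h_j$ and with $g_j\cc$ thrifty for each $c\in X^{S_j}$: the base case is $g_0:=g$, and the inductive step is a direct application of Lemma~\ref{lem.strong} to $g_{j-1}$ with $S:=S_j$, yielding $g_j:=g'$ and $h_j:=h$ in the notation of that lemma. Next I would set $g':=g_N$ and $h:=h_N\circ h_{N-1}\circ\cdots\circ h_1$, noting $h\in\C_I$ because $\C_I$ is a clone and each component of $h$ is obtained from the components of the $h_j$ by composition. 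I would verify $g=g'\circ h$ by telescoping the identities $g_{j-1}=g_j\circ h_j$. Finally, for the hereditary thriftiness of $g'$, I would fix an arbitrary $S\subseteq M$ and $c\in X^S$, pick $i$ with $S=S_i$, recall $g'=g_N\subseteq g_i$, hence $g'\cc\subseteq (g_i)\cc$, and invoke the downward inheritance of thriftiness under restriction together with the fact that $(g_i)\cc$ is thrifty by construction. This completes the proof.
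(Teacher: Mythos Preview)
Your proof is correct and follows essentially the same approach as the paper: enumerate the subsets of $M$, iterate Lemma~\ref{lem.strong} to build a descending chain $g=g_0\supseteq\cdots\supseteq g_N=g'$ with $g_{j-1}=g_j\circ h_j$, set $h=h_N\circ\cdots\circ h_1\in\C_I$, and use $g'\subseteq g_i$ to conclude hereditary thriftiness. Your explicit remark that thriftiness is inherited under restriction is exactly the (tacit) point the paper relies on when it writes ``As $g':=g^k\subseteq g^i$ for all $i$, we see that $g'$ is hereditarily thrifty.''
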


\begin{proof}[Proof of the corollary]
    We enumerate all subsets $S$ of $M$ as
    $S_1, \ldots, S_{k}$.  Applying Lemma \ref{lem.strong} repeatedly, we inductively define a sequence
    $g=g^0\supseteq g^1\supseteq \cdots \supseteq g^k$ such that:
    \begin{itemize}
        \item $g^i\cc$ is thrifty for each $c\in X^{S_i}$
        \item $g^{i-1}= g^i\circ h^i$ for some function $h^i: X^M\To X^M$ in $\C_I$.
    \end{itemize}
    We have $g = g^0 = g^1 \circ h^1 = g^2\circ h^2\circ h^1 =
    \cdots =g^k\circ (h^k\circ\cdots \circ h^1)$. As $g':=g^k\subseteq g^i$ for all $i$, we see that $g'$ is hereditarily
    thrifty. Clearly, $h:=h^k\circ\cdots \circ
    h^1$ is in $\C_I$.
\end{proof}

\begin{defn}
    Let $S\subseteq M$, $T:=M\setminus S$, and $c\in X^S$.
    \begin{itemize}
    \item    For any set $A\subseteq X^T$  we write $c*A$ for the set $\{c\cup z:
    z\in A\}\subseteq X^M$.
    \item
    For any partial function $g:X^T\to Y$ we write $c*g$ for the
    function with domain $c*\dom(g)$ defined by $(c*g)(c\cup z) = g(z)$.
    \item
    If $Y=X^T$, then we write $c\#g$ for the partial function from $X^{M}
    $ to $X^{M}$ mapping each $c\cup z\in c*\dom(g)$ to $c\cup g(z)$.
    \end{itemize}
\end{defn}

The following lemma shows how to calculate with the operators just
defined; we leave its straightforward verification to the reader.

\begin{lem}\label{lem.grauslich} Let $M, S, T, $ and $c$ as in the preceding definition.
\begin{enumerate}
\item
   For all partial functions $f: X^T \To Y$ and all partial $g: X^T\To X^T$,  $c*(f\circ g )   = (c*f) \circ (c\# g)$.
\item
   For every partial function $g:X^T\to Y$, $(c*g)\cc = g$.
\item
   For every partial function $g:X^M\to Y$,
    $ g = \bigcup_{c\in X^S}  c*(g\cc)$.
    \\
    Moreover: Whenever $(g_c:c\in X^S)$ is a family of partial
    functions from $X^T$ to $Y$ with $g = \bigcup_{c\in X^S}  c*g_c$, then
    we must have $g_c = g\cc$  for all $c\in X^S$.
\end{enumerate}
\end{lem}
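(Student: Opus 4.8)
The plan is to verify the three identities by direct comparison of domains and values, leaning throughout on the single structural observation that the map $(c,z)\mapsto c\cup z$ is a bijection $X^S\times X^T\to X^M$. In particular, $c\cup z=c'\cup z'$ forces $c=c'$ and $z=z'$, so $c*(\cdot)$ is injective when applied to functions on $X^T$, and for distinct $c\in X^S$ the sets $c*A$ are pairwise disjoint, being distinguished by the $S$-coordinates of their elements. Recall also that for partial functions, $\dom(f\circ g)=\{z\in\dom(g):g(z)\in\dom(f)\}$ and $(f\circ g)(z)=f(g(z))$.

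For (1), I would first compute $\dom(c*(f\circ g))$: it equals $c*\dom(f\circ g)=\{c\cup z:z\in\dom(g),\ g(z)\in\dom(f)\}$. On the other side, $\dom(c\#g)=c*\dom(g)$ and $(c\#g)(c\cup z)=c\cup g(z)$, which lies in $\dom(c*f)=c*\dom(f)$ exactly when $g(z)\in\dom(f)$; hence $\dom((c*f)\circ(c\#g))$ is the same set. On this common domain, $(c*(f\circ g))(c\cup z)=f(g(z))$, while $((c*f)\circ(c\#g))(c\cup z)=(c*f)(c\cup g(z))=f(g(z))$, so the two partial functions coincide.

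For (2), I would note $\dom((c*g)\cc)=\{z\in X^T:z\cup c\in\dom(c*g)\}=\{z:c\cup z\in c*\dom(g)\}=\dom(g)$, using injectivity of $c*(\cdot)$, and $(c*g)\cc(z)=(c*g)(c\cup z)=g(z)$ on that domain. For (3), given $g:X^M\to Y$, each $w\in X^M$ decomposes uniquely as $w=c\cup z$ with $c=w\rest S$ and $z=w\rest T$; then $w\in\dom(g)$ iff $z\in\dom(g\cc)$ iff $w\in\dom(c*(g\cc))$, so as $c$ ranges over $X^S$ the domains $c*\dom(g\cc)$ partition $\dom(g)$, and on the block indexed by $c$ we have $(c*(g\cc))(w)=g\cc(z)=g(w)$; hence $g=\bigcup_{c\in X^S}c*(g\cc)$. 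For the uniqueness clause, if $g=\bigcup_{c\in X^S}c*g_c$, then since the domains of the $c*g_c$ are pairwise disjoint, $c*g_c$ is precisely the restriction of $g$ to the tuples with $S$-coordinate $c$; applying $(\cdot)\cc$ and invoking (2) gives $g_c=(c*g_c)\cc=g\cc$.

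There is no genuine obstacle here — the whole argument is bookkeeping through the identification $X^M\cong X^S\times X^T$. The one thing that requires care, and the reason the lemma is worth stating explicitly, is not to conflate the three superficially similar operators $c*(\cdot)$, $c\#(\cdot)$, and $(\cdot)\cc$: the first two produce functions on $X^M$ from data on $X^T$ (respectively on $X^T$-to-$X^T$), while $(\cdot)\cc$ goes the other way, restricting a function on $X^M$ to one on $X^T$. Once one keeps straight which object lives where, every step is mechanical.
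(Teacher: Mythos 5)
Your verification is correct and complete; the paper itself omits the proof entirely, describing it as a straightforward verification left to the reader, and your direct domain-and-value bookkeeping via the bijection $(c,z)\mapsto c\cup z$ is exactly the intended argument. No issues.
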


We are now ready to finish Part~1 and provide the proof of its main
lemma.

\begin{proof}[Proof of Lemma \ref{lem.strong}]
    Using Lemma \ref{lem.grauslich}, we split $g$ into its ``components'' $g\cc: X^T\To Y$, and use Lemma
    \ref{countable} to deal with those functions; that is, we write $ g = \ccup c*g\cc$.
    By Lemma~\ref{lem.splitting}, each function $g\cc$ can be written as $t_c\cup w_c$, where $t_c$ is
    thrifty, $w_c$  is wasteful, and $t_c$ and $w_c$ have disjoint domains.

    By Lemma~\ref{countable}, we can find a set $A$ of width 1,
    injective functions $w'_c\subseteq w_c$ whose domains are disjoint
    subsets of $A$, and partial functions $h_c$ with $\ran(h_c)
    \subseteq A$, such that $w_c = w'_c \circ h_c$, for all $c\in X^S$. We write $t_c$ as the composition $t_c = t_c \circ i_c$,
    where $i_c$ is the identity function on $\dom(t_c)\subseteq X^T$.

    Now the domains of $i_c$ and $h_c$, as well as those of $t_c$ and $w_c'$, are disjoint, so Fact~\ref{fact.union} implies that for all $c\in X^S$,
    $$
        g\cc = t_c\cup w_c = (t_c\circ i_c) \cup (w'_c\circ h_c)
        \subseteq (t_c\cup w'_c) \circ (i_c\cup h_c).
    $$

    Therefore, by Lemma~\ref{lem.grauslich},
    $$
        c*g\cc  \subseteq c*[(t_c\cup w'_c) \circ (i_c\cup h_c)]  =
        (c*(t_c\cup w'_c)) \circ( c\# (i_c\cup h_c)  ).
    $$
    This, together with Fact~\ref{fact.union}, allows us to calculate:
    $$g = \ccup c*g\cc \subseteq \ccup
    (c*(t_c\cup w'_c)) \circ( c\# (i_c\cup h_c)  ) \subseteq
    $$
    $$\subseteq
                \bigg(\ccup
    (c*(t_c\cup w'_c))
                \bigg)
            \circ
                \bigg(\ccup
    c\# (i_c\cup h_c)  )
            \bigg)   =: g' \circ h'.
    $$
    Now for all $c\in X^S$, $g'\cc = t_c\cup w'_c$ is thrifty, since $t_c$ is thrifty and since $w'_c$ is injective (and hence thrifty), and by Lemma~\ref{lem.finiteUnions}.
    Since by Fact~\ref{fact.sub}
    there is $h\subseteq h'$ such that $g=g'\circ h$, it remains to see that $h'$
    is in $\C_I$. A quick check of the
    definitions shows that $h'$ is the union of $\ccup c\# i_c$ with $\ccup c\#
    h_c$; by Lemma \ref{lem.finiteUnions}, it suffices to check that each of these unions is in $\C_I$. All components of the first union are projections, thus
    certainly in $\C_I$. For the second union, those components of with index in $S$ are
    just projections as well; the components with index
    in $T$, on the other hand,
    have range in a projection of $A$, and hence are elements of $\C_I$ as well.
\end{proof}

\bigskip \subsection*{Part 2: Generating  hereditarily thrifty
functions}

In this section, we fix an arbitrary $f\in\C_J\setminus\C_I$ and
prove the following:

\begin{lem}\label{lem.hereditarilyThriftyAreGenerated}
    Let $q: X^M\To X$ be a hereditarily thrifty partial function in $\C_J$.
    Then $q$ can be written as a term of $f$ and partial functions in $\C_I$.
\end{lem}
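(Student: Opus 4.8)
The plan is to leverage the structure of a hereditarily thrifty $q$ together with the fact that $f\in\C_J\setminus\C_I$ supplies a ``collapsing'' behavior that $\C_I$ alone cannot produce. First I would analyze what $f\notin\C_I$ means concretely: there is some $A\in I$ (a finite-width set) such that $f[A^n]$ has infinite width, i.e.\ $f$ maps a bounded-width input set to a set whose intersection with some line is arbitrarily large. By composing $f$ with suitable functions in $\C_I$ on both sides (rescaling/relabeling inside a single line, which preserves both $I$ and $J$), I expect to be able to normalize $f$ to a very concrete ``witness'' operation: something that, restricted to an appropriate bounded domain, acts like a surjection onto a single line $\omega\times\{n_0\}$, or more usefully, an operation $e$ with $e\in\C_J$, $e\notin\C_I$, which on a bounded piece of its domain realizes every value in some fixed line. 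The key point is that a line is $J$-small but not $I$-small, so $f$ (unlike anything in $\C_I$) can ``concentrate'' an unbounded set of inputs onto a $J$-small target.

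The heart of the argument is then to use this witness to generate an arbitrary hereditarily thrifty $q\in\C_J$. Since $q$ is hereditarily thrifty, for every $S\subseteq M$ and every $c\in X^S$ the section $q\cc$ is thrifty, so $q\cc^{-1}[d]$ is bounded for each $d$. I would try to write $q = e \circ (h_1,\dots,h_n)$ where the $h_i\in\C_I$ encode a bounded domain appropriately and $e$ is the normalized witness, but the arities will not match in general, so more realistically $q$ should be built by a term of moderate depth: use $\C_I$-functions to ``spread out'' the argument tuples of $X^M$ injectively (this is possible precisely because $q$ is thrifty — preimages are bounded, hence there is enough room in $X^M$ to separate fibers without leaving $I$), apply the witness derived from $f$ to perform the actual collapsing onto lines that $q$ requires (here $q\in\C_J$ is used: the image meets every line finitely, so each ``collapse'' is into a $J$-small, and after appropriate decomposition even bounded, target), and finally post-compose with a $\C_I$-function to place the values correctly. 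The hereditary thriftiness, not just thriftiness, is needed to make this work uniformly after fixing any subset of coordinates, which is what lets the term be a genuine composition valid on all of $X^M$ rather than only on a bounded piece.

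I would organize the write-up as: (1) a normalization lemma extracting a usable witness $e$ from $f$ using only $\C_I$ pre- and post-composition; (2) a reduction showing it suffices to generate hereditarily thrifty $q$ of a special simple form (e.g.\ with range contained in a single line, or piecewise so), using Fact~\ref{fact.union} and Lemma~\ref{lem.finiteUnions} to glue countably many disjoint-domain pieces — each piece being bounded or line-valued — back together; (3) the generation of each simple piece by an explicit term in $e$ and $\C_I$-functions, exploiting boundedness of fibers to inject the domain and $J$-smallness of the target to apply $e$. The main obstacle I anticipate is step (3): making the ``collapse'' performed by $e$ match exactly the fiber structure of $q$ while keeping all auxiliary functions in $\C_I$ — in particular ensuring that the injective spreading-out of $X^M$ really can be done by a single $\C_I$-function (width-$1$ range) simultaneously for all fibers, and that reassembling the countably many pieces via Fact~\ref{fact.union} does not secretly require a function outside $\C_I$. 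Handling the interaction between the finitely many coordinate subsets $S$ (the hereditary condition) and this countable gluing is the delicate bookkeeping where the proof will either go through cleanly or require an extra idea.
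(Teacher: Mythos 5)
Your proposal correctly identifies the two ingredients that the paper also uses at the outset --- normalizing $f$ (after reducing to a unary witness) so that it transports ``unbounded height'' into ``unbounded width'', and the observation that the width increase in the term must be performed by $f$ since $\C_I$ alone cannot produce it. But one detail of your normalization is already off: you suggest the witness could act ``like a surjection onto a single line $\omega\times\{n_0\}$''; this is impossible, because $f\in\C_J$ and a full line is not in $J$. What one actually gets (and what the paper arranges, via $f((0|n\oplus k))=(k|n)$ for $k<n$) is a map from a width-$1$ subset of a single row onto a set in $J\setminus I$: finite on every line, but of unbounded width. This distinction matters, because it dictates how $f$ can be used.

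The real gap is in your step (3), which you yourself flag as the obstacle: you never say how the term is actually assembled, and the architecture you sketch (inject the domain by a $\C_I$-function, apply the witness once in the middle, post-compose with a $\C_I$-function, then glue countably many line-valued pieces) is not the one that works and is not shown to work. The paper's mechanism is an encode/decode scheme in which $f$ is applied not once but once for each pair $(S,j)$ with $S\subseteq M$, $j\notin S$: a $\C_I$-function $h^{S,j}$ (with range of width $1$, inside row $0$) computes $(0\mid K_{q\cc}(q(u)^y)\oplus u_j^y)$, where $K_{q\cc}(n)$ is the bound witnessing that the section $q\cc$ pulls the line $\omega\times\{n\}$ back into a bounded set (Lemma~\ref{lem.thriftyImpliesBounded}); $f$ then converts this into $(u_j^y\mid K_{q\cc}(q(u)^y))$, i.e.\ it smuggles the coordinate $u_j^y$ into an $x$-coordinate. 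The outermost function is a decoder $Q$ applied to $u$ together with all these hints, and the entire content of the proof is Lemma~\ref{lem:mainLemma}: on any product of width-$1$ sets, for each line and each order type of $(u_1^y,\dots,u_m^y)$ at most one tuple can be mapped by $Q$ into that line, so $Q[A]$ has width $\le m!$ and hence $Q\in\C_I$. This is precisely where hereditary thriftiness (one subset $S$ per hint) enters, and it is the step your proposal leaves entirely open; without it, nothing forces your final ``post-composition'' to lie in $\C_I$ while still reproducing $q$ exactly.
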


Thus, given an arbitrary $g\in\C_J$, we can use
Corollary~\ref{cor.decomposition} to decompose it as $g=q\circ h$,
and by Lemma~\ref{lem.hereditarilyThriftyAreGenerated}, we can write
$q$ as a term of $f$ and partial $\C_I$--functions. By
Lemma~\ref{lem.extension}, we can extend each of the partial
$\C_I$--functions in this term to total functions in $\C_I$, which
yields a representation of $g$ as a term over $\{f\}\cup\C_I$,
finishing our proof.

In order to prove the lemma, we first show that we can assume $f$ to
be unary.

\begin{lem}
    $\{f\}\cup\C_I$ generates a unary operation in
    $\C_J\setminus\C_I$.
\end{lem}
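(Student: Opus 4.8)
The plan is to start from the given $f\in\C_J\setminus\C_I$, say of arity $n$, and produce a unary operation in $\C_J\setminus\C_I$ by substituting suitable unary $\C_I$--functions into the arguments of $f$. First I would exploit the failure $f\notin\C_I$: there is some $A\in I$ (finite width) with $f[A^n]\notin I$, i.e.\ $f[A^n]$ has infinite width, so it meets infinitely many lines $\linen$ in sets of unbounded size. Since $A$ has finite width, $A$ is contained in a set of the form $\{(i,j):i<w\}$ intersected with $\omega\times\omega$ — more precisely, for every $j$ there are at most $w$ elements of $A$ on the line $\omega\times\{j\}$. The key observation is that $A$, being of finite width, is the union of finitely many ``graphs'': there are finitely many partial functions $\omega\to\omega$ whose graphs cover $A$. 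Consequently $A^n$ is covered by finitely many sets of the form $\Gamma_1\times\cdots\times\Gamma_n$ where each $\Gamma_k$ is the graph of a partial injection (or at least a partial function with finite fibres), each of which is naturally parametrized by a single copy of $\omega$.

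Next I would reduce to a single parametrizing variable. Pick a partial function $e:\omega\to A$ (i.e.\ $e:\omega\to X$ with range in $A$) that is a bijection onto an infinite subset $A_0\subseteq A$ chosen so that $f[A_0^n]$ still has infinite width — this is possible because $A^n$ is a finite union of product sets, so by a pigeonhole/Ramsey argument one of the finitely many pieces $\Gamma_1\times\cdots\times\Gamma_n$ already has the property that $f$ restricted to it has infinite width, and each $\Gamma_k$ is the graph of a partial function from $\omega$. Now define the unary operation $g'$ by $g'(t)=f(e_1(t),\ldots,e_n(t))$ where the $e_k$ are unary partial functions on $X$ whose behaviour on an infinite ``diagonal'' copy of $\omega$ inside $X$ reproduces the coordinates of the $n$-tuples in the chosen piece. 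Each $e_k$ has range of finite width (in fact width $1$ on the relevant part), hence $e_k\in\C_I$ by Lemma~\ref{lem.extension}; and the composite $g'=f\circ(e_1,\ldots,e_n)$ lies in the clone generated by $\{f\}\cup\C_I$.

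It then remains to check the two membership claims for this unary $g'$. That $g'\in\C_J$: given any set $B$ of finite $J$--size (finite intersection with every line), $e_k[B]$ has finite intersection with every line as well because $e_k\in\C_I\subseteq\C_J$ (using $I\subseteq J$, so $\C_I\subseteq\C_J$ — this monotonicity direction does hold, or is immediate from the definitions), hence $f[\prod_k e_k[B]]\in J$ since $f\in\C_J$; so $g'[B]\in J$. That $g'\notin\C_I$: take $B$ to be the infinite diagonal copy of $\omega$ on which the $e_k$ realize the coordinates of the chosen piece; then $B$ has width $1$, so $B\in I$, but $g'[B]=f[\text{chosen piece}]$ was arranged to have infinite width, so $g'[B]\notin I$. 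Thus $g'\in\C_J\setminus\C_I$, as desired.

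The main obstacle I anticipate is the bookkeeping in the reduction step: arranging a single infinite set $B$ of width $1$ in $X$, together with width-$1$ partial functions $e_1,\ldots,e_n$ on $X$, so that the tuple map $t\mapsto(e_1(t),\ldots,e_n(t))$ sends $B$ onto an infinite subset of $A^n$ on which $f$ has infinite width. This needs the ``finite union of product sets'' description of $A^n$ (from finite width of $A$) plus a pigeonhole argument to isolate one product piece, and then a careful choice of the diagonal and of the $e_k$ so that they are genuinely in $\C_I$; everything else is a routine verification against the definitions of $\C_I$ and $\C_J$.
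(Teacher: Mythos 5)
Your construction is correct, but it takes a different (more hands-on) route than the paper. The paper disposes of this lemma in two lines by invoking the fact, cited from \cite{BGHP}, that every ideal clone is determined by its unary part in the sense that $\C_K=\Pol(\C_K\cap\Oo)$; applied to $K=I$ this immediately yields unary $g_1,\dots,g_n\in\C_I$ with $f(g_1,\dots,g_n)\notin\C_I$, and membership in $\C_J$ is then automatic. What you do is essentially reprove the relevant instance of that fact from scratch: you witness $f\notin\C_I$ by a set $A\in I$ with $f[A^n]\notin I$, and then parametrize (a suitable piece of) the countable set $A^n$ by a single width-$1$ set $B\subseteq X$ via unary functions $e_k$ of finite-width range, so that $g'=f(e_1,\dots,e_n)$ satisfies $g'[B]\notin I$. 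This works --- note that your pigeonhole reduction to one product piece $\Gamma_{i_1}\times\cdots\times\Gamma_{i_n}$ is not even needed, since $A^n$ itself is countable and can be enumerated and parametrized directly. The one point to fix is your parenthetical claim that $I\subseteq J$ gives $\C_I\subseteq\C_J$ ``immediately from the definitions'': the paper explicitly warns that $K\mapsto\C_K$ is not monotone, and the general implication is false (for $A\in J\setminus I$ one has no control over $h[A^n]$ from $h\in\C_I$). Here $\C_I\subsetneq\C_J$ does hold, but that is a cited result of \cite{CH01}, not a triviality; alternatively, for your particular $e_k$ the membership $e_k\in\C_J$ follows directly because their total ranges have finite width and hence lie in $J$, so no appeal to monotonicity is needed. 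With that repaired, your argument is a self-contained and slightly longer substitute for the paper's black-box citation.
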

\begin{proof}
    For a set of unary operations $\M\subseteq \Oo$, set
    $$
        \Pol(\M):=\{g\in\O:
        g(g_1,\ldots,g_n)\in \M \text{ for all } g_1,\ldots,g_n\in\M\}.
    $$
    It is a fact (see \cite{BGHP}) and easy to see that for any
    ideal $K$, $\C_K=\Pol(\C_K\cap\Oo)$. Hence, since $f\nin
    \C_I$, there exist unary $g_1,\ldots,g_n\in\C_I$ such that
    $f(g_1,\ldots,g_n)\nin\C_I$. As $f,g_1,\ldots,g_n$ are elements of
    $\C_J$, so is $f(g_1,\ldots,g_n)$. Hence, the latter operation
    witnesses our assertion.
\end{proof}

Referring to the preceding lemma, we assume without loss of
generality that $f$ is unary itself. Hence, there is a set $A\ss X$
of width 1 that $f$ maps onto a set in $J\setminus I$. Since every
function which maps every line $\linen$ into itself is an element of
$\C_I$, we can simplify notation by assuming that all elements of
$A$ have $x$-coordinate $0$.

Fix an injective map $(n,k)\mapsto n\oplus k$ from $\{(n,k): k< n\in
\omega\}$ into a co-infinite subset of $\omega$ (for example
$n\oplus k = n^2+k$). By further permuting of lines and rows we may
without loss of generality assume that

\begin{flushright}
    $f((0|n\oplus k))= (k|n)$\qquad\qquad\qquad\qquad\qquad$(^*)$
\end{flushright}
for all $k< n<\omega$.

For any finite index set $M=\{1,\ldots,m\}$, define
$$P^*(M) = \{ (S,j): S\subseteq M, j\in M\setminus S\}.$$
We will show that if $q: X^M\To X$ is hereditarily thrifty in and in
$\C_J$, then there are partial $\C_I$--functions $Q:X^M\times
X^{P^*(M)} \to X$ and $h^{S,j}:X^{M}\to X$, $(S,j)\in P^*(M)$, such
that
$$
    q(u) =
    Q\bigg(u,\bigl(f (h^{S,j}(u)): (S,j)\in P^*(M)\bigr)\bigg).
$$
Clearly, this is sufficient for the proof of Lemma
\ref{lem.hereditarilyThriftyAreGenerated}. We start by defining the
$h^{S,j}$; to do this, we need the following lemma.

\begin{lem}\label{lem.thriftyImpliesBounded}
    Let $t: X^M\To X$ be a partial function which is thrifty and in $\C_J$. Then $t\inv[\linen]$ is bounded for
    all $n\in\omega$.
\end{lem}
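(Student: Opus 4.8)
The plan is to argue by contradiction, and the shape of the argument is dictated by what membership in $\C_J$ says: to use $t\in\C_J$ we must exhibit a single set $A\in J$ whose image $t[A^M]$ fails to lie in $J$. So assume that for some fixed $n\in\omega$ the set $t\inv[\linen]$ is \emph{unbounded}; unravelling the definition, this says that for every $k\in\omega$ there is a tuple $u\in t\inv[\linen]$ all of whose components have $y$-coordinate $\geq k$. Out of this I will manufacture a set $A\ss X$ with $A\in J$ such that $t[A^M]$ meets the line $\linen$ in infinitely many points. Since $t\in\C_J$, any total extension $t'\in\C_J$ of $t$ satisfies $t'[A^M]\in J$, while $t'[A^M]\supseteq t[A^M]$; so infinitely many points of $\linen$ in $t[A^M]$ give the desired contradiction.

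The set $A$ is built from a recursively chosen sequence $u^{(1)},u^{(2)},\ldots\in t\inv[\linen]$, and the two hypotheses on $t$ play complementary roles: \emph{unboundedness of $t\inv[\linen]$} keeps supplying preimages whose components lie on arbitrarily high lines, while \emph{thriftiness of $t$} forces such a high preimage to avoid all the fibres already used, so that its image is a \emph{new} point of $\linen$. In detail: suppose $u^{(1)},\dots,u^{(r)}$ have been chosen, with $t(u^{(j)})=(x_j|n)$. Using thriftiness, fix for each $j\leq r$ a bound $b_j$ with $t\inv[(x_j|n)]\ss B^M_{b_j}$, and let $k_r$ be an integer larger than all these $b_j$ and larger than every $y$-coordinate occurring in any component of $u^{(1)},\dots,u^{(r)}$. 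By unboundedness pick $u^{(r+1)}\in t\inv[\linen]$ all of whose components have $y$-coordinate $>k_r$. Then $u^{(r+1)}\nin B^M_{b_j}$ (none of its components has $y$-coordinate $<b_j$), hence $u^{(r+1)}\nin t\inv[(x_j|n)]$ for each $j\leq r$; writing $t(u^{(r+1)})=(x_{r+1}|n)$, this gives $x_{r+1}\nin\{x_1,\dots,x_r\}$.

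Now let $A\ss X$ be the set of all components of all the tuples $u^{(r)}$. Because the components of $u^{(r+1)}$ all have $y$-coordinate strictly above every $y$-coordinate appearing in $u^{(1)},\dots,u^{(r)}$ (and the $k_r$ are strictly increasing), each line $\omega\mult\{\nu\}$ can contain components from only finitely many of the tuples $u^{(r)}$, and each $u^{(r)}$ contributes at most $m$ components; hence $A\cap(\omega\mult\{\nu\})$ is finite for every $\nu$, i.e.\ $A\in J$. On the other hand, each $u^{(r)}$ lies in $A^M\cap\dom(t)$ and $t(u^{(r)})=(x_r|n)$ with the $x_r$ pairwise distinct, so $t[A^M]\cap\linen$ is infinite, contradicting $A\in J$ together with $t\in\C_J$ as explained above. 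The only step that needs a little care is the bookkeeping in the recursive choice of $k_r$: it must be pushed past \emph{both} the fibre bounds $b_j$ (to keep the images fresh) \emph{and} all previously used $y$-coordinates (to keep $A$ in $J$); once one commits to making both enlargements at each stage, the rest is immediate.
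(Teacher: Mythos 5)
Your proof is correct and follows essentially the same route as the paper's: both arguments derive a contradiction by extracting from the unbounded set $t\inv[\linen]$ a sequence of preimage tuples whose components climb in $y$-coordinate (so that the set of all components is $J$-small, indeed of finite width), and then use thriftiness to conclude that these tuples hit infinitely many distinct points of the line $\linen$, violating $t\in\C_J$. The only difference is bookkeeping: the paper selects the subsequence first and invokes thriftiness afterwards to see the image is infinite, whereas you interleave the two, forcing a fresh image point at each step of the recursion.
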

\begin{proof}
    Suppose this is not the case, and write
    $t\inv[\linen]=\{c^0,c^1,\ldots\}$. Since this set is unbounded, we
    can find an infinite $U\subseteq \mathbb{N}$ such that the sequences
    $((c^i_j)^y : i\in U)$ are injective for all $j\in M$ (where $c^i_j$ is the $j$-th component of the $M$-tuple $c^i$).
    Now given some value $d\in\linen$, only finitely many of the
    tuples
    $\{c^i:i\in U\}$ can be mapped to $d$, as $t$ is thrifty.
    Thus, the set $\{c^i:i\in U\}\in I^M$ is mapped by $t$ to an infinite
    subset of $\linen$ and hence to a set
    outside $J$, a contradiction.
\end{proof}

Lemma~\ref{lem.thriftyImpliesBounded} allows us to make the
following definition.

\begin{defn}
    For any
    thrifty partial $\C_J$--function $t:X^M\to X$ define a function
    $K_t:\omega \to\omega $  by
    $$
        K_t(n) =
        \min \{k:  t^{-1}(\omega \times \{n\}) \subseteq B_k^M\}.
    $$
    %
    % $S \subseteq M$.  If $f\cc $ is thrifty, $c\in X^S$, then we
    % define  $K_{f,c} = K_{f\cc}$, i.e.,
    % $$ \forall z\in X^{M\setminus S}: \  K_{f,S} (c\cup z)
    % = \min \{k: f\cc\inv(\omega\times \{f(c\cup z)\} \subseteq B_k\}.$$
\end{defn}

\begin{defn}
    Let $q:X^M\to X$ be a hereditarily thrifty partial $\C_J$--function, and $(S,j)\in
    P^*(M)$. Write $T:=M\setminus S$; then
    each tuple $u\in X^{M}$ can be written as $u=c\cup z$, $c\in
    X^S$, $z\in X^T$.  Write $z_j$  for the $j$-th component of
    any such $z$.

    We define $h^{S,j}:X^{M}\to X$ as follows.   For any $c\in
    X^S$, $z\in X^T$ we let
    $$
            h^{S,j}(c\cup z) =
                \begin{cases}
                    \bigl(0| K_{q\cc} ( q(u)^y ) \oplus z_j^y\bigr) & \mbox{if $z_j^y < K_{q\cc}(q(u)^y)$}\\
                    \mbox{undefined} & \mbox{otherwise.}
                \end{cases}
    $$
\end{defn}

Observe that since the range of $h^{S,j}$ has width 1, the function
so defined is an element of $\C_I$. We turn to the definition of
$Q$.

\begin{defn}
    For any $q:X^M\to Y$, we define a partial function $Q:X^M\times
    X^{P^*(M)} \to Y$: For any $u = (u_i:i\in M)$ and any $v = (v_{S,j}:
    (S,j)\in P^*(M))$ we let
    $$Q(u,v) =
    \begin{cases}
        q(u),            &\mbox{if $v_{S,j}= f(h^{S,j}(u))$ for all $(S,j)\in P^*(M)$} \\
        \mbox{undefined}              &\mbox{otherwise.} \\
    \end{cases}
    $$
\end{defn}

Clearly, $q(u) = Q\bigg(u,\bigl(f(h^{S,j}(u)): (S,j)\in
P^*(M)\bigr)\bigg)$. Therefore, to obtain a proof of Lemma
\ref{lem.hereditarilyThriftyAreGenerated}, it suffices to show that
if $q$ in $\C_J$, then $Q\in \C_I$. This will be the direct
consequence of the following final lemma.

\begin{lem}\label{lem:mainLemma}
    Assume that $(A_i: i\in M)$
    and $(A_{S,j}: (S,j)\in P^*(M) )$ are sets of width 1.
    Let $A:= X^M\times X^{P^*(M)}$
    be the product of those sets. Then  $Q[A]$ is a set of width
    $\le m!$.
\end{lem}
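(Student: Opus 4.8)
The plan is to show that for any $w\in Q[A]$, every line $\linen$ contains at most $m!$ points with $x$-coordinate bounded by the relevant width-1 constraints; more precisely, to bound the width of $Q[A]$ we analyze, for each fixed $n\in\omega$, how many values in $\linen$ can be hit by $Q$ on tuples from $A$. By the definition of $Q$, a point $u\cup v\in A$ contributes $Q(u,v)=q(u)$ only if $v_{S,j}=f(h^{S,j}(u))$ for all $(S,j)\in P^*(M)$, so it suffices to understand, for each $n$, the set of $u\in X^M$ with all coordinates $u_i$ in the width-1 sets $A_i$ such that $q(u)\in\linen$ and such that each $f(h^{S,j}(u))$ lies in the corresponding width-1 set $A_{S,j}$.

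First I would fix $n$ and a value $d\in\linen\cap Q[A]$, choose a witnessing tuple $u$, and run the following descent on the coordinates of $M$. Starting from $S=\emptyset$, the function $q\cc$ (with $c$ the empty tuple, i.e.\ $q$ itself) is thrifty and in $\C_J$, so $K_{q}(d^y)$ is defined; since $d=q(u)$, the definition of $K_{q}$ forces $u\in B^M_{K_{q}(d^y)}$, meaning some coordinate $u_j$ has $y$-coordinate $<K_{q}(d^y)$. Pick such a $j$ (this is the index that will be ``frozen'' first), set $S_1=\emptyset$, and look at $h^{S_1,j}(u)=(0\mid K_{q}(d^y)\oplus z_j^y)$ where $z_j=u_j$. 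By property $(^*)$, $f(h^{S_1,j}(u))=(z_j^y\mid K_{q}(d^y))$, and for this to lie in the width-1 set $A_{S_1,j}$ — whose intersection with the line $\omega\times\{K_{q}(d^y)\}$ has size $1$ — the value $z_j^y$ is \emph{determined}. Since $u_j\in A_j$ and $A_j$ has width $1$, knowing $z_j^y=u_j^y$ pins down $u_j$ completely. Now fix that coordinate: we continue with $S_2=\{j\}$, the partial function $q_{\cup c}$ for $c=u\!\restriction\!S_2$, which is again thrifty (as $q$ is \emph{hereditarily} thrifty) and in $\C_J$ (restricting a $\C_J$-function by fixing arguments stays in $\C_J$), apply $K_{q_{\cup c}}$ to $d^y$, find a new coordinate $j'\notin S_2$ with small $y$-coordinate, use $(^*)$ and width-1-ness of $A_{S_2,j'}$ and of $A_{j'}$ to pin down $u_{j'}$, and iterate.

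After at most $m$ steps this descent fixes all coordinates of $u$, so $u$ is determined — but not uniquely from $d$ alone: at step $i$ we had a \emph{choice} of which coordinate $j$ with small $y$-value to freeze, and there are at most $|M\setminus S_i|\le m-i+1$ such choices. Hence the number of tuples $u$ (with all $u_i\in A_i$ and all side-conditions satisfied) mapping to a given $d\in\linen$ is at most $m!$ — wait, I want the number of \emph{values} $d$ in $\linen$, not tuples mapping to one $d$. So I reorganize: the descent shows that the pair (``sequence of frozen coordinates'', ``the tuple $u$'') is recoverable, and the sequence of frozen coordinates is one of at most $m!$ possibilities, while once the sequence of indices $j_1,\dots,j_m$ is fixed, the descent \emph{computes} $u$ from $d^y$ step by step (each $K$ value and each width-1 lookup being deterministic). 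Thus there is a map from $\linen\cap Q[A]$ into (sequences of length $\le m$ over $M$) $\times$ (the fiber data), and more carefully: for each of the $\le m!$ index-sequences we get at most one $u$, hence at most one $d=q(u)$; so $|\linen\cap Q[A]|\le m!$. Since $n$ was arbitrary and the same bound applies to every projection (the argument is symmetric in which line of the target we look at), $Q[A]$ has width $\le m!$.

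The main obstacle I expect is bookkeeping the descent cleanly: one must verify at each stage that $q_{\cup c}$ (for $c$ the already-frozen partial tuple) is both thrifty — which is exactly hereditary thriftiness of $q$ — and in $\C_J$ — which needs the elementary fact, implicit in the setup, that fixing coordinates of a $\C_J$-function yields a $\C_J$-function, so that $K_{q_{\cup c}}$ is well-defined; and one must be careful that the ``value $z_j^y$ is determined'' step really uses both that $A_{S,j}$ meets the single line $\omega\times\{K_{q_{\cup c}}(d^y)\}$ in at most one point (width $1$) \emph{and} that this line is the correct one, which is where identity $(^*)$ and the injectivity of $\oplus$ enter. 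Packaging ``the index-sequence plus $d^y$ determines $u$'' as a genuine injection to get the clean count $m!$ (rather than a messier product) is the one place where a little care with the order of quantifiers is needed, but no hard estimates are involved.
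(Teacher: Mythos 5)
Your proof is correct and follows essentially the same route as the paper's: an iterative descent that pins down the coordinates of $u$ one at a time using $K_{q_{\cup c}}$, the identity $(^*)$, and the width-$1$ lookups, with the factor $m!$ coming from the order in which coordinates are processed. The paper merely organizes the same count differently, fixing a permutation $\pi$ in advance and showing there is at most one admissible tuple with $u_{\pi(1)}^y\le\cdots\le u_{\pi(m)}^y$ (so the coordinate frozen at each step is the unfrozen one of smallest $y$-value), whereas you let the descent choose any coordinate below the threshold and then count the at most $m!$ index sequences.
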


Observe that if $E\in I$, then it has finite width, so it can be
written as a finite union $E=\bigcup_{i\in N}E_i$ of sets of width
1. Set $R:=M\times P^*(M)$. Now $E^R=\bigcup_{r\in N^R} \prod_{j\in
R} E_{r(j)}$. Since every of the products in the big union is mapped
by $Q$ to a set of width $\leq m!$, the whole union is mapped to a
set of finite width. Hence, $Q\in\C_I$.

\begin{proof}[Proof of Lemma \ref{lem:mainLemma}]

    We show:
    \begin{quote}
        For all $n\in\omega$, and all permutations $\pi:M\to M$ there is at most one tuple $(u,v)\in  A
        %  X^m\times X^{P^*(m)}
        $
        with  $Q(u,v) \in \omega\times \{n\}$
        such that
        $u_{\pi(1)}^y\le \cdots\le u_{\pi(m)}^y$.
    \end{quote}

    This clearly implies the assertion. For notational simplicity (but without loss of generality, since the definition of $Q$ did not refer to any
    order of the indices) we will
    prove this only for the special case where $\pi$ is the
    identity; that is, we show that there is at most one tuple
    $(u,v)\in A$ with $u_1^y\leq \cdots\leq u_m^y$ and $Q(u,v)\in
    \omega\times\{n\}$.

    By assumption, all factors $B\subseteq X$ of the product set $A$ have width
    $1$, meaning that their intersection with every line contains at
    most one element; by replacing these factors by supersets (still of width 1), we may assume that
    these intersections are never empty. For every factor $B$, we
    write $B\langle n\rangle$ for the unique $k$ with $(k|n)\in B$.

    We now define inductively tuples $c^0,\ldots,c^m$
    such that
    $c^j\in X^{\{1,\ldots, j\}}$ for all $j\leq m$ and such that for all $1\leq
    j\leq m$, $c^j$ is the extension of $c^{j-1}$ by one coordinate.

    Let $c^0\in X^\emptyset$ be the empty  tuple. To continue, set
    $$
        k_1:= K_q(n),\quad b_1 := A_{\emptyset, 1}\langle k_1\rangle,\quad a_1:=A_1\langle b_1\rangle,
    $$
    and let $c^1$ be the tuple mapping $1$ to $(a_1|b_1)$. For $j\ge 1$, having already defined the tuple $c^{j-1}\in
    X^{\{1,\ldots,
    j-1\}}$, we set
    $$
        k_{j}:= K_{q_{\cup c^{j-1}}}(n), \quad
        b_{j} := A_{\{1,\ldots, j-1\},j} \langle k_{j}\rangle,\quad
        a_j:=A_j\langle b_j\rangle,
    $$
    and let $c^j$ be the tuple extending $c^{j-1}$ which maps  $j$ to
    $(a_j|b_j)$. After $m$ steps, we arrive at $c^m = ( \,(a_1|b_1), \ldots, (a_m|b_m)\,)$.

    We claim that for all $(u,v)\in A$ with $u_1^y\le \cdots
    \le u_m^y$, if $Q(u,v)\in \linen $ then $u_j = (a_j|b_j)$ for all $1\leq j\leq
    m$. This will finish the proof since if these hypotheses
    uniquely determine $u$, then they also determine $v$, as is
    obvious from the definition of $Q$.

    To see the truth of our final claim, take any tuple $(u,v)$ satisfying the hypotheses; so
    $Q(u,v) = q(u) \in \linen $.

    We start by showing $u_1=(a_1|b_1)$. Since
    $q\inv[\linen]\subseteq B_{k_1}^M$, there is some $i\in M$ such that $u_i^y<k_1$; by our assumption $u_1^y\leq\cdots\leq u_m^y$,
    this implies $u_1^y < k_1$. Therefore, the definition of $h^{\emptyset,1}$ yields
    $h^{\emptyset, 1}(u) = (0|K_{q_{\cup \emptyset}}(q(u)^y) \oplus u_1^y)=(0|K_q(n) \oplus u_1^y)=(0|{k_1} \oplus u_1^y)$. Consequently,
    by our assumption $(^*)$ on $f$,
    $f(h^{\emptyset, 1}(u)) = (u_1^y|k_1)$.   Since $Q(u,v)$
    is defined, we must have $v_{\emptyset,1} = f(h^{\emptyset, 1}(u)) = (u_1^y|k_1)$. But $v_{\emptyset,1}\in A_{\emptyset, 1}$, hence
    $u_1^y =
    A_{\emptyset,1}\langle k_1\rangle$. As $u_1= (u_1^x|u_1^y)\in A_1$ we must also
    have $u_1^x = A_1\langle u_1^y\rangle$.  Hence, $u_1 = (a_1|b_1)$.

    We now show that $u_2=(a_2|b_2)$. To this end, we consider the function $q\cco$, a
    function from $X^{\{2,\ldots, m\}}$ into $X$.   We have $q\cco(u_2,
    \ldots, u_n) \in \linen$. Therefore, $q\cco\inv[\linen]\subseteq B_{k_2}^{\{2,\ldots,m\}}$, so we must have $u_2^y < k_2$.
    As above we conclude (in this order):
    \begin{itemize}
        \item $h^{\{1\}, 2}(u) = (0|{k_2} \oplus u_2^y)$
        \item $f(h^{\{1\}, 2}(u)) = (u_2^y|k_2)$
        \item $v_{\{1\},2} = f(h^{\{1\}, 2}(u)) = (u_2^y|k_2)\in A_{\{1\},2}$
        \item  $u_2^y = A_{\{1\},2}\langle k_2\rangle$
        \item $u_2^x = A_2 \langle u_2^y\rangle$
        \item So $u_2 = (u_2^x|u_2^y)=(A_2\langle u_2^y\rangle |A_{\{1\},2}\langle k_2\rangle)=(a_2|b_2)$.
    \end{itemize}

    Continuing inductively in this fashion, one sees that indeed, $u_j=(a_j|b_j)$ for all $1\leq j\leq m$.
\end{proof}

%\bibliographystyle{amsalpha}
%\bibliography{skolem}

\providecommand{\bysame}{\leavevmode\hbox
to3em{\hrulefill}\thinspace}
\providecommand{\MR}{\relax\ifhmode\unskip\space\fi MR }
% \MRhref is called by the amsart/book/proc definition of \MR.
\providecommand{\MRhref}[2]{%
  \href{http://www.ams.org/mathscinet-getitem?mr=#1}{#2}
} \providecommand{\href}[2]{#2}

\end{document}